\theoremstyle{plain}
\newtheorem{theorem}{Theorem}[section]
\newtheorem{lem}[theorem]{Lemma}
\newtheorem{myCli}{Claim}
\newtheorem{pro}[theorem]{Problem}
\theoremstyle{definition}
\newtheorem{other}{}
\title{\bf A signless Laplacian spectral Erd\H{o}s-Stone-Simonovits theorem\footnote{This paper was published on Discrete Mathematics 349 (2026) 114665. This is the final version. E-mail addresses: \url{zhengj@jxnu.edu.cn} (J. Zheng), \url{lhh@jxnu.edu.cn} (H. Li), \url{suli@jxnu.edu.cn} (L. Su).}}
\author{
Jian Zheng, \quad
Honghai Li\footnote{Corresponding author}, \quad  
Li Su, 
\\
\small   School of  Mathematics and Statistics, Jiangxi Normal University,  Nanchang\\
\small  Jiangxi 330022,  China. Email: \url{zhengj@jxnu.edu.cn}, \url{suli@jxnu.edu.cn} }
\date{\today}
\begin{document}
\maketitle
\begin{abstract}
 The celebrated Erd\H{o}s--Stone--Simonovits theorem states that
$\mathrm{ex}(n,F)= \big(1-\frac{1}{\chi(F)-1}+o(1) \big)\frac{n^{2}}{2}$,
where $\chi(F)$ is the chromatic number of $F$. In 2009, Nikiforov proved a spectral extension of the Erd\H{o}s--Stone--Simonovits theorem in terms of the adjacency spectral radius. 
In this paper, we shall establish a unified extension in terms of the signless Laplacian spectral radius. 
Let $q(G)$ be the signless Laplacian spectral radius of $G$ and we denote $\mathrm{ex}_{q}(n,F) =\max \{q(G):|G|=n ~\mbox{and}~F\nsubseteq G\}$. 
It is known that the Erd\H{o}s--Stone--Simonovits type result for the signless Laplacian spectral radius
does not hold for even cycles. 
We prove that if $F$ is a graph with $\chi(F)\geq 3$, then
$\mathrm{ex}_{q}(n,F)=\big(1-\frac{1}{\chi(F)-1}+o(1) \big)2n$.
This solves a problem proposed by Li, Liu and Feng (2022), which gives an entirely satisfactory answer to the problem of estimating $\mathrm{ex}_q(n,F)$.  Furthermore, it extends the aforementioned result of
Erd\H{o}s, Stone and Simonovits as well as the spectral result of Nikiforov.
Our result indicates that the Erd\H{o}s--Stone--Simonovits type result regarding the signless Laplacian spectral radius is valid in general. 

\vspace{3mm}

\noindent 
{\it MSC classification}\,: 15A42, 05C50
\vspace{2mm}

\noindent 
{\it Keywords}\,: Erd\H{o}s--Stone--Simonovits theorem; signless Laplacian matrix; $Q$-index.
\end{abstract}

\section{Introduction}

Let $G$ be a simple graph on $n$ vertices with vertex set $V(G)$ and edge set $E(G)$. We denote by $e(G)$ the number of edges of $G$, that is, $e(G)=|E(G)|$. For a vertex $u\in V(G)$, we write $ N (u)$ for the set of neighbors of $u$, and $G-u$ for the graph obtained from $G$ by removing $u$ from $V(G)$ and removing all edges containing $u$ from $E(G)$, that is, $G-u$ denotes the subgraph of $G$ induced by $V(G)\setminus \{u\}$.
The \emph{degree} $d_{G}(u)$ (or simply $d(u)$)  is the number of edges in $G$ containing $u$.
We write $\delta(G)$ for the \emph{minimum degree} of $G$.
The \emph{adjacency matrix} of $G$ is defined as  $A(G)=[a_{ij}]_{n\times n}$  where $a_{ij}=1$ if two vertices $v_{i}$ and $v_{j}$ are adjacent in $G$ and $a_{ij}=0$ otherwise. The largest eigenvalue of $A(G)$, denoted by $\lambda(G)$, is called the \emph{adjacency spectral radius} of $G$.

A graph $G$ is called \emph{$F$-free} if it does not contain a subgraph isomorphic to $F$. The \emph{Tur\'an number} of $F$,  denoted by $\mathrm{ex}(n,F)$, is defined as the maximum number of edges in an $F$-free graph on $n$ vertices.
The  \emph{Tur\'an density} of a graph $F$ is defined as
$$\pi(F):=\lim\limits_{n\to \infty}\frac{\mathrm{ex}(n,F)}{\binom{n}{2}}.$$
The existence of the limit is a direct consequence of the averaging argument proposed by Katona--Nemetz--Simonovits \cite{KNS1964}.
 Let $T_{r}(n)$ be the $r$-partite Tur\'{a}n graph, which is a complete $r$-partite graph on $n$ vertices where the sizes of its partite set are as equal as possible.
It is easy to verify that $e(T_r(n)) = (1- \frac{1}{r}) \frac{n^2}{2} - O_r(1)$.
The well-known Tur\'{a}n theorem (see \cite{Bol78}) asserts that if $G$ is
an $n$-vertex $K_{r+1}$-free graph, then
$e(G)\le e(T_r(n))$, with equality if and only if $G=T_{r}(n)$. Under the above notation, the Tur\'{a}n theorem implies that $\mathrm{ex}(n,K_{r+1})=e(T_r(n))$ and $\pi (K_{r+1}) = 1- \frac{1}{r}$.

Erd\H{o}s and Stone \cite{ES1946}, and Erd\H{o}s and Simonovits \cite{ES1966} independently established
the following result, which provides an asymptotic value of $\mathrm{ex}(n,F)$ for a general graph $F$.

\begin{theorem}[Erd\H{o}s--Stone--Simonovits~\cite{ES1946,ES1966}]\label{edg}
If $F$ is a graph with chromatic number $\chi(F)\ge 2$, then
$$\mathrm{ex} (n,F)=\bigg(1-\frac{1}{\chi(F)-1}+o(1)\bigg)\frac{n^{2}}{2}.$$
\end{theorem}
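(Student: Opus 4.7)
The plan is to derive Theorem \ref{edg} directly from the Erd\"os-Stone Theorem (Theorem \ref{ESt}), splitting the argument into matching lower and upper bounds with $r:=\chi(F)-1$.

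For the lower bound, I would exhibit an explicit $F$-free graph on $n$ vertices with the desired number of edges, namely the Tur\'an graph $T_{r}(n)$, the complete $r$-partite graph on $n$ vertices whose parts differ in size by at most one. Since $T_{r}(n)$ is $r$-chromatic, every subgraph $H\subseteq T_{r}(n)$ inherits a proper $r$-coloring, hence $\chi(H)\leq r<\chi(F)$; in particular $F\nsubseteq T_{r}(n)$. A standard count (distributing $n$ vertices as evenly as possible into $r$ classes) gives $e(T_{r}(n))=\bigl(1-\tfrac{1}{r}\bigr)\tfrac{n^{2}}{2}-O(1)$, so
$$ex(n,F)\geq e(T_{r}(n))=\bigg(1-\frac{1}{\chi(F)-1}+o(1)\bigg)\frac{n^{2}}{2}.$$

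For the upper bound, I would argue by contradiction. Fix $\varepsilon>0$ and suppose $G$ is an $F$-free graph on $n$ vertices with $e(G)\geq\bigl(1-\tfrac{1}{r}+\varepsilon\bigr)\tfrac{n^{2}}{2}$. Let $t:=|V(F)|$. The crucial embedding observation is that since $\chi(F)=r+1$, a proper $(r+1)$-coloring of $F$ partitions $V(F)$ into $r+1$ independent sets, each of size at most $t$; padding these classes up to $t$ vertices each shows that $F$ is a subgraph of the complete $(r+1)$-partite graph $K_{r+1}(t)$ with $t$ vertices in every class. Now apply Theorem \ref{ESt} with these choices of $r$, $t$, and $\varepsilon$: for $n$ large enough, $G$ must contain a copy of $K_{r+1}(t)$, hence a copy of $F$, contradicting $F$-freeness. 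Therefore, for every $\varepsilon>0$ and all sufficiently large $n$,
$$ex(n,F)<\bigg(1-\frac{1}{\chi(F)-1}+\varepsilon\bigg)\frac{n^{2}}{2}.$$

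Combining the two bounds yields the claimed asymptotic. The only genuinely substantial step is the upper bound, and that work has already been absorbed into Theorem \ref{ESt}; the remaining obstacle is purely a matter of bookkeeping, namely verifying the embedding $F\hookrightarrow K_{\chi(F)}(|V(F)|)$ and matching the $o(1)$ error terms by letting $\varepsilon\to 0$ as $n\to\infty$. No further probabilistic, spectral, or regularity-lemma input is required at this stage of the paper.
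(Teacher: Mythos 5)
Your proposal is correct and is exactly the route the paper has in mind: the paper gives no proof of Theorem \ref{edg}, remarking only that it "can be obtained simply from Theorem \ref{ESt}," and your argument (Tur\'an graph $T_{\chi(F)-1}(n)$ for the lower bound, the embedding $F\subseteq K_{\chi(F)}(|V(F)|)$ plus Erd\"os--Stone for the upper bound) is precisely that standard derivation.
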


Theorem \ref{edg} is equivalent to saying that $\pi(F)=1-\frac{1}{\chi(F)-1}$. In some cases, the Tur\'an density
of a graph is more useful in applications than   the Tur\'an number, which can be seen in our later proof.
In this note, we investigate the spectral extension of Theorem \ref{edg}.
We define $\mathrm{ex}_{\lambda}(n,F)$ to be the maximum of the adjacency spectral radius among all $n$-vertex $F$-free graphs, that is,
$$\mathrm{ex}_{\lambda}(n,F):=\max \{\lambda(G):|G|=n ~\mbox{and}~F\nsubseteq G\}.$$
In 2009, Nikiforov \cite{NS2009}  presented a spectral version of Theorem \ref{edg}.

\begin{theorem}[Nikiforov \cite{NS2009}]\label{adj}
If $F$ is a graph with chromatic number $\chi(F)\ge 2$, then
$$\mathrm{ex}_{\lambda}(n,F)=\bigg(1-\frac{1}{\chi(F)-1}+o(1)\bigg)n.$$
\end{theorem}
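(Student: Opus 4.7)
\noindent\textbf{Proof plan for Theorem \ref{adj}.} Write $r := \chi(F)$. The plan treats the two directions of the asymptotic equality separately.

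\emph{Lower bound.} Take $G$ to be the Tur\'an graph $T_{r-1}(n)$, the balanced complete $(r-1)$-partite graph on $n$ vertices. Since $\chi(T_{r-1}(n)) = r - 1 < r = \chi(F)$, the graph is $F$-free. It is essentially $\bigl(1 - \frac{1}{r-1}\bigr)n$-regular, and a direct calculation using the equitable partition into colour classes yields $\lambda(T_{r-1}(n)) = \bigl(1 - \frac{1}{r-1}\bigr)n + O(1)$. Hence $ex_\lambda(n, F) \geq \bigl(1 - \frac{1}{r-1} - o(1)\bigr)n$.

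\emph{Upper bound.} Fix $\varepsilon > 0$ and suppose, for contradiction, that for arbitrarily large $n$ there is an $F$-free graph $G$ on $n$ vertices with $\lambda(G) \geq \bigl(1 - \frac{1}{r-1} + \varepsilon\bigr)n$. The plan is to produce $F \subseteq G$ by applying the combinatorial Erd\"os-Stone-Simonovits theorem (Theorem \ref{edg}) to a suitable subgraph. The immediate attempt via $2e(G) \geq \lambda(G)^2$ is insufficient, since $\bigl(1 - \frac{1}{r-1} + \varepsilon\bigr)^2 < 1 - \frac{1}{r-1}$ whenever $r \geq 3$ and $\varepsilon$ is small; the spectral assumption must first be upgraded to a combinatorial density statement on a subgraph.

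The bridging step is a spectral trimming lemma: any graph $G$ with $\lambda(G) \geq \bigl(1 - \frac{1}{r-1} + \varepsilon\bigr)n$ contains an induced subgraph $H$ on $m = (1 - o(1))n$ vertices with $\delta(H) \geq \bigl(1 - \frac{1}{r-1} + \varepsilon/2\bigr)m$. One proves the lemma by iteratively deleting a vertex $v$ whose Perron-eigenvector entry $x_v$ drops below a threshold $\tau = \tau(\varepsilon, r)$: the eigenvalue equation $\lambda(G) x_v = \sum_{u \sim v} x_u$ combined with the normalisation $\|x\|_2 = 1$ (so $\max_u x_u \leq 1$) gives $d_G(v) \geq \lambda(G) x_v$, ensuring that surviving vertices have high degree in $G$; simultaneously, the Rayleigh-quotient computation shows that each deletion reduces the spectral radius by at most $\lambda(G) x_v^2 / (1 - x_v^2)$, so the cumulative spectral loss remains $o(n)$. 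With $H$ in hand one obtains $e(H) \geq \delta(H) \cdot m/2 > ex(m, F)$ by Theorem \ref{edg}, whence $F \subseteq H \subseteq G$---the desired contradiction.

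\emph{Main obstacle.} The serious technical point is the quantitative calibration of the trimming. With $\|x\|_2 = 1$ the entries satisfy $\sum_v x_v^2 = 1$, so most $x_v$ are $O(1/\sqrt{n})$; a single-threshold pass cannot by itself secure both a linear-sized survivor set and a minimum-degree ratio strictly exceeding $1 - \frac{1}{r-1}$. One must iterate the trimming while tracking the Perron vector of the evolving subgraph (or interleave it with a degree-based pruning) and verify that for each survivor the number of deleted neighbours is small enough that the bound $d_G(v) \geq \lambda(G) x_v$ still forces a proportionally high degree in $H$. Making this spectral-to-combinatorial transfer rigorous--so that $\delta(H)/|V(H)|$ strictly exceeds $1 - \frac{1}{r-1}$--is the technical heart of the argument.
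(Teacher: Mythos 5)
The paper does not prove Theorem \ref{adj}; it is quoted from Nikiforov \cite{NS2009}, so your attempt must be judged against his argument. Your lower bound via $T_{r-1}(n)$ is correct and standard. The upper bound, however, has a genuine gap: the ``spectral trimming lemma'' on which everything rests is false as stated. Let $c=1-\frac{1}{r-1}+\varepsilon<1$ and let $G$ be a clique on $\lceil cn\rceil+1$ vertices together with $n-\lceil cn\rceil-1$ isolated vertices. Then $\lambda(G)\geq cn$, yet every induced subgraph $H$ on $(1-o(1))n>\lceil cn\rceil+1$ vertices contains an isolated vertex, so $\delta(H)=0$ rather than $(c-\varepsilon/2)|H|$. (The conclusion $F\subseteq G$ still holds in this example, but not via your bridge.) Your own ``main obstacle'' paragraph concedes the underlying difficulty: with $\|\mathbf{x}\|_2=1$, the inequality $d_G(v)\geq\lambda(G)x_v$ is useful only for vertices with $x_v=\Omega(1)$, of which there are $O(1)$, so no single- or multi-pass threshold scheme of the kind you describe can produce a linear-sized set of vertices whose degree ratio strictly exceeds $1-\frac{1}{r-1}$. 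As written, the spectral hypothesis is never converted into a combinatorial density statement to which Theorem \ref{edg} could be applied, so the upper bound is unproven.

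The actual route is different and avoids trimming altogether. Nikiforov first shows, using his eigenvalue--clique inequalities (quantitative versions of the spectral Tur\'an theorem), that $\lambda(G)\geq\bigl(1-\frac{1}{r-1}+\varepsilon\bigr)n$ forces $\Omega_{\varepsilon,r}(n^{r})$ copies of $K_{r}$ in $G$; he then applies the Erd\"os--Stone--Bollob\'as machinery (in effect, a K\H{o}v\'ari--S\'os--Tur\'an argument on the $r$-uniform hypergraph of these cliques) to extract a blow-up $K_{r}(t)$ with $t\to\infty$, which contains $F$ since $\chi(F)=r$. If you wish to salvage a subgraph-based argument, the statement you actually need is much weaker than your lemma --- an induced subgraph on $\Omega_{\varepsilon}(n)$ vertices whose edge density strictly exceeds $1-\frac{1}{r-1}$ --- but even this requires a genuine proof, since, as you correctly observe, $\lambda(G)^2\leq 2e(G)$ alone loses too much.
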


 By the Rayleigh quotient, it is well-known that $\lambda (G) \ge 2e(G)/n$ for every $n$-vertex graph $G$. So the spectral version in Theorem \ref{adj} can imply the size version in Theorem \ref{edg}.
An  alternative proof of Theorem \ref{adj} can be found in \cite{LP2022}.

\subsection{The signless Laplacian spectral radius}

 The \emph{signless Laplacian matrix} of a graph $G$ is defined as $Q(G):=D(G)+A(G)$,
where $D(G)$ is the degree diagonal matrix and $A(G)$ is the adjacency matrix of $G$.
  The largest eigenvalue of $Q(G)$, denoted by $q(G)$, is called \emph{$Q$-index} or the \emph{signless Laplacian spectral radius} of $G$. It is easy to see that there is always a nonnegative eigenvector associated to $q(G)$.
   The signless Laplacian matrix  of graphs has been extensively
studied (see surveys  \cite{CS09,CS10,CS10AADM}).
By the Perron--Frobenius theorem, for a connected graph $G$, there exists a positive unit eigenvector corresponding to $q(G)$, which is called the Perron vector of $Q(G)$.
We define $\mathrm{ex}_{q}(n,F)$ to be the maximum signless Laplacian spectral radius of an $F$-free graph on $n$ vertices, that is,
$$\mathrm{ex}_{q}(n,F):=\max \{q(G):|G|=n ~\mbox{and}~F\nsubseteq G\}.$$
 Determining the value of $\mathrm{ex}_q(n,F)$ has a long history in the study of spectral graph theory.
For example, it was shown in \cite{AN2013ela,HJZ2013} that if $G$ is an $n$-vertex $K_{r+1}$-free graph, then $q(G)\le (1- \frac{1}{r})2n$.
Observe that this result involving the signless Laplacian spectral radius can imply the classical Tur\'{a}n theorem.  Many other results on $\mathrm{ex}_q(n,F)$ for various graphs $F$ can be found in the literature; see \cite{Yu2008} for matchings, \cite{NY2014} for paths, \cite{FNP2013,Yuan2014,NY2015} for cycles,
\cite{AFNP2016} for complete bipartite graphs,
\cite{CLZ2020} for linear forests,
\cite{ZHG21} for friendship graphs,
\cite{WZ2023} for fan graphs, and
\cite{CLZ2024} for intersecting odd cycles.

A natural question proposed by Li, Liu and Feng \cite{LLF2022-advances} is to ask whether there is a  signless Laplacian spectral analogue of the Erd\H{o}s--Stone--Simonovits theorem for a general graph $F$.

\begin{pro}[See \cite{LLF2022-advances}] \label{prob-1-3}
Is it true that $\mathrm{ex}_q (n,F)=\big(1-\frac{1}{\chi(F)-1}+o(1)\big)2n$ for any graph $F$?
\end{pro}
Li, Liu and Feng \cite[p. 19]{LLF2022-advances} pointed out that the answer of this question is negative in the case $F=C_{2k+2}$ for every integer $k\geq1$.
 In this note, we shall show that the answer to this problem is positive for any graph $F$ with $\chi(F)\geq3$ (see the forthcoming Theorem \ref{signless}), which could be viewed as a unified extension of both Theorem \ref{edg} and Theorem \ref{adj} by invoking the fact $q(G)\ge 2 \lambda (G) \ge \frac{4m}{n}$.
 Last but not least, we would also like to mention that it was claimed in reference \cite[Theorem 1.3]{AN2012ela} that Nikiforov had solved Problem \ref{prob-1-3} before 2012.
However, his manuscript has never appeared or been made  available.
As Nikiforov has passed away, we are unable to reach his preprint for further comment on this problem.
Inspired by an unpublished work of Nikiforov \cite{N2014A}, we are going to provide a detailed proof of Problem \ref{prob-1-3} in this paper.

\begin{theorem}[Main result] \label{signless}
If  $F$ is a graph with chromatic number $\chi(F)\geq 3$, then
$$\mathrm{ex}_q (n,F)=\bigg(1-\frac{1}{\chi(F)-1}+o(1)\bigg)2n.$$
\end{theorem}

For bipartite graphs $F$, we show the following result.

\begin{theorem} \label{thm-1-5}
If $F$ is a bipartite graph, i.e., $\chi (F)=2$, then
\[   \mathrm{ex}_q(n,F)= \begin{cases} (1+o(1))n & \text{if $F$ is not a star;} \\
2(t-1) &  \text{if $F$ is a star $K_{1,t}$.}
\end{cases} \]
\end{theorem}

\section{Preliminaries}

 It is easy to verify that $q(G)=n$ for any complete bipartite graph $G$ on $n$ vertices, particularly, $q(T_{2}(n))=n$.
The following two lemmas can be found in \cite{AN2012ela,AN2013ela}.
For the sake of completeness, we provide a detailed proof for readers.

\begin{lem} \label{L1}
If $F$ is a graph with chromatic number $\chi(F)\geq3$, then the limit
$$\pi_{q}(F):=\lim\limits_{n\to \infty}\frac{\mathrm{ex}_q (n,F)}{n}$$
exists, and $\pi_{q}(F)$ satisfies
$$\pi_{q}(F)\leq \frac{\mathrm{ex}_q (n,F)}{n-1}.$$
\end{lem}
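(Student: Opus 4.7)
The plan is to mimic the classical Katona--Nemetz--Simonovits averaging argument for the Tur\'an density, but with the Rayleigh quotient of $Q(G)$ playing the role of edge counting. Let $G$ be an $F$-free graph on $n$ vertices with $q(G)=ex_{q}(n,F)$, and let $x$ be a unit nonnegative Perron eigenvector of $Q(G)$, so that
\[
q(G)=x^{T}Q(G)x=\sum_{uv\in E(G)}(x_{u}+x_{v})^{2}.
\]
For each $v\in V(G)$, the graph $G-v$ is still $F$-free, hence $q(G-v)\leq ex_{q}(n-1,F)$. On the other hand, restricting $x$ to $V(G)\setminus\{v\}$ and feeding the result into the Rayleigh characterization of $q(G-v)$ yields the lower bound
\[
q(G-v)\;\geq\;\frac{q(G)-\sum_{u\in\Gamma(v)}(x_{u}+x_{v})^{2}}{1-x_{v}^{2}}.
\]

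The key step is to multiply this inequality through by $1-x_{v}^{2}$ and then sum over all $v\in V(G)$. On the right-hand side, every edge $uv\in E(G)$ contributes $(x_{u}+x_{v})^{2}$ exactly twice (once for each endpoint), so the double sum equals $2q(G)$ and the right-hand side collapses to $nq(G)-2q(G)=(n-2)q(G)$. On the left-hand side, since $\|x\|=1$, the weights satisfy $\sum_{v}(1-x_{v}^{2})=n-1$, so the uniform bound $q(G-v)\leq ex_{q}(n-1,F)$ gives
\[
(n-2)\,ex_{q}(n,F)\;\leq\;(n-1)\,ex_{q}(n-1,F),\qquad\text{equivalently}\qquad \frac{ex_{q}(n,F)}{n-1}\;\leq\;\frac{ex_{q}(n-1,F)}{n-2}.
\]
Thus $f(n):=ex_{q}(n,F)/(n-1)$ is non-increasing in $n$. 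Being also nonnegative, it converges, and I would set $\pi_{q}(F):=\lim_{n\to\infty}f(n)$. Since $n/(n-1)\to 1$, we have $\lim_{n\to\infty}ex_{q}(n,F)/n=\pi_{q}(F)$ as well, and monotonicity delivers the stated inequality $\pi_{q}(F)\leq ex_{q}(n,F)/(n-1)$ for every $n$.

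The only subtle point is choosing the correct weights for the averaging. Summing $q(G-v)\leq ex_{q}(n-1,F)$ with unit weights does not close, because the Rayleigh lower bound on $q(G-v)$ inherits the normalization factor $1-x_{v}^{2}$; weighting each term by exactly this factor is what causes both sides to telescope cleanly, via the identity $\sum_{uv\in E(G)}(x_{u}+x_{v})^{2}=q(G)$ on the right and $\sum_{v}(1-x_{v}^{2})=n-1$ on the left. The hypothesis $\chi(F)\geq 3$ is not strictly needed for the existence of the limit, but combined with $ex_{q}(n,F)\geq q(T_{2}(n))=n$ it forces $\pi_{q}(F)\geq 1$, so that the density is substantive and aligned with the constant appearing in the main theorem.
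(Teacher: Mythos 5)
Your proof is correct, but it takes a genuinely different route from the paper. The paper deletes a single carefully chosen vertex $u$ (the one with minimum Perron coordinate $\mu_n$), plugs the restricted eigenvector into the Rayleigh quotient of $Q(H-u)$, and then invokes the eigenvalue equation $(q(H)-d(u))\mu_n=\sum_{j\in\Gamma(u)}x_j$ to control the cross terms; the resulting inequality $q(H-u)\geq q(H)\frac{1-2\mu_n^2}{1-\mu_n^2}-\frac{1-n\mu_n^2}{1-\mu_n^2}$ yields monotonicity of $ex_q(n,F)/(n-1)$ only after one more step that uses $ex_q(n,F)\geq q(T_2(n))=n$, i.e.\ the hypothesis $\chi(F)\geq 3$. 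You instead average the restricted Rayleigh bound over \emph{all} vertices $v$, weighting each by $1-x_v^2$; the double sum $\sum_v\sum_{u\in\Gamma(v)}(x_u+x_v)^2=2q(G)$ telescopes with $\sum_v(1-x_v^2)=n-1$ to give $(n-2)\,ex_q(n,F)\leq(n-1)\,ex_q(n-1,F)$ directly. This is the exact signless-Laplacian analogue of the Katona--Nemetz--Simonovits averaging argument for $ex(n,F)/\binom{n}{2}$ (which the paper cites in its introduction but does not actually imitate), it avoids the eigenvalue equation entirely, and it establishes monotonicity and hence existence of the limit without the $\chi(F)\geq 3$ hypothesis — a strict strengthening of what the paper's proof shows, as you correctly point out. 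The one small thing worth flagging for hygiene is that the displayed form $q(G-v)\geq\bigl(q(G)-\sum_{u\in\Gamma(v)}(x_u+x_v)^2\bigr)/(1-x_v^2)$ presumes $x_v<1$; since you immediately clear the denominator, the weighted inequality $(1-x_v^2)q(G-v)\geq q(G)-\sum_{u\in\Gamma(v)}(x_u+x_v)^2$ is what you actually sum, and that holds trivially (both sides vanish) even if $x_v=1$, so no case split is needed, but the argument is cleanest if stated in that multiplied-through form from the start.
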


\begin{proof}
  Suppose that $H$ is an $F$-free graph on $n$ vertices with $q(H)=\mathrm{ex}_q (n,F)$, and $\mathbf{x}=(x_{1},\ldots,x_{n})$ is a nonnegative unit eigenvector  to $q(H)$. Let $\mu_n=\min\{x_{1},\ldots,x_{n}\}$ and $u$ be a vertex for which $x_{u}=\mu_n$.
First, the Rayleigh quotient implies that
\begin{eqnarray} \notag
q(H)&=&\mathbf{x}^\mathrm{T}Q(H)\mathbf{x}=
\sum_{ij\in E(H)}(x_{i}+x_{j})^{2}=\sum_{ij\in E(H-u)}(x_{i}+x_{j})^{2}+\sum_{j\in  N (u)}(\mu_n+x_{j})^{2}\\ \notag
&=& \sum_{ij\in E(H-u)}(x_{i}+x_{j})^{2}+d(u)\mu_n^{2}+2\mu_n\sum_{j\in  N (u)}x_{j}+\sum_{j\in  N (u)}x_{j}^{2}\\
&\leq& (1-\mu_n^{2})q(H-u)+d(u)\mu_n^{2}+2\mu_n\sum_{j\in  N (u)}x_{j}+\sum_{j\in  N (u)}x_{j}^{2}. \label{e1}
\end{eqnarray}
From the eigenvalue-eigenvector equation  for eigenpair ($q(H)$, $\mathbf{x}$) at the vertex $u$, we have
$$(q(H)-d(u))\mu_n=\sum_{j\in  N (u)}x_{j}$$
and then
\begin{eqnarray*}
d(u)\mu_n^{2}+2\mu_n\sum_{j\in  N (u)}x_{j}+\sum_{j\in  N (u)}x_{j}^{2}
&=& d(u)\mu_n^{2}+2(q(H)-d(u))\mu_n^{2}+\sum_{j\in  N (u)}x_{j}^{2}\\
&\leq& d(u)\mu_n^{2}+2(q(H)-d(u))\mu_n^{2}+1-(n-d(u))\mu_n^{2}\\
&=& 2q(H)\mu_n^{2}-n\mu_n^{2}+1.
\end{eqnarray*}
Combining this with (\ref{e1}), we find that
\begin{equation}\label{e2}
q(H-u)\geq q(H)\frac{1-2\mu_n^{2}}{1-\mu_n^{2}}-\frac{1-n\mu_n^{2}}{1-\mu_n^{2}}.
\end{equation}
Since $T_{2}(n)$ is $F$-free, we have $\mathrm{ex}_q (n,F)\geq q(T_{2}(n))=n$. Therefore, by (\ref{e2}),
\begin{eqnarray}\notag
\frac{\mathrm{ex}_q (n-1,F)}{n-2} \geq
\frac{q(H-u)}{n-2}
&\geq& \frac{q(H)}{n-1}\bigg(1+\frac{1}{n-2}\bigg)\frac{1-2\mu_n^{2}}{1-\mu_n^{2}}-\frac{1-n\mu_n^{2}}{(n-2)(1-\mu_n^{2})} \\  \label{zy1}
&=& \frac{\mathrm{ex}_q (n,F)}{n-1}\bigg(1+\frac{1-n\mu_n^{2}}{(n-2)(1-\mu_n^{2})}\bigg)-\frac{1-n\mu_n^{2}}{(n-2)(1-\mu_n^{2})}\\ \notag
&\geq& \frac{\mathrm{ex}_q (n,F)}{n-1}.
\end{eqnarray}
This implies that the sequence $\big\{\frac{\mathrm{ex}_q (n,F)}{n-1}\big\}_{n=1}^{\infty}$ is nonincreasing, and so it is convergent.
\end{proof}

\begin{lem} \label{lem-2-2}
Let $G_{n}$ be an $F$-free graph on $n$ vertices with $q(G_{n})=\mathrm{ex}_{q}(n,F)$. Suppose that $\mathbf{x}=(x_{1},\ldots,x_{n})$ is a nonnegative unit eigenvector  corresponding to $q(G_{n})$. We denote $\mu_{n}=\min\{x_{1},\ldots,x_{n}\}$ and $\delta_{n}=\delta(G_{n})$.
If $\mu_{n}>0$, then $\mathrm{ex}_q(n,F) \leq \delta_{n}+\sqrt{\delta_{n}^{2}+(\frac{1}{n\mu_{n}^{2}}-1)n\delta_{n}}$.
\end{lem}

\begin{proof}
Suppose that $u\in V(G_{n})$ is a vertex such that $d(u)=\delta_{n}$. Then
$$q(G_{n})x_{u}=\delta_{n}x_{u}+\sum_{i\in N (u)}x_{i}.$$
By the Cauchy--Schwarz inequality, we have
\begin{displaymath}
\begin{split}
(q(G_{n})-\delta_{n})^{2}\mu_{n}^{2}& \leq  (q(G_{n})-\delta_{n})^{2}x_{u}^{2}=\bigg(\sum_{i\in N (u)}x_{i}\bigg)^{2}
\leq\delta_{n}\sum_{i\in N (u)}x_{i}^{2}\leq\delta_{n}\bigg(1-\sum_{i\in V(G_{n})\backslash N (u)}x_{i}^{2}\bigg)\\
& \leq \delta_{n}(1-(n-\delta_{n})\mu_{n}^{2})=\delta_{n}-(n\delta_{n}-\delta_{n}^{2})\mu_{n}^{2}.
\end{split}
\end{displaymath}
This implies that
$$ q(G_{n})\leq \delta_{n}+\sqrt{\delta_{n}^{2}+\bigg(\frac{1}{n\mu_{n}^{2}}-1\bigg) n \delta_{n}},$$
completing the proof.
\end{proof}

Even though our proof of Theorem \ref{signless} is relatively short, to start with, we give a detailed overview of the proof   highlighting certain novel aspects of the proof.
Note that
showing $\mathrm{ex}_q(n,F) = 2\left(1- \frac{1}{\chi (F) -1} + o(1) \right)n$ is equivalent to showing $\pi_q(F) = 2\pi (F)$.
 Clearly, we know from Theorem \ref{edg} that $\frac{\delta_n}{n} \le \frac{2e(G_n)}{n^2} \le 1- \frac{1}{\chi (F) -1} + o(1) = \pi(F) +o(1)$.
Using the upper bound in Lemma \ref{lem-2-2},
it suffices to find infinitely many positive integers $n\in \mathbb{N}$ such that $n \mu_{n}^2 \ge 1- o(1)$. If this can be achieved, then Lemma \ref{lem-2-2}
entails $\frac{\mathrm{ex}_q(n,F)}{n} \le \frac{\delta_n}{n} +\sqrt{(\frac{\delta_n}{n})^2 + o(1)\frac{\delta_n}{n}} = 2\frac{\delta_n}{n} + o(1)$, as needed.

The following theorem establishes a key relation between the Tur\'{a}n density and the $q$-spectral  radius density.
Theorem \ref{L2} plays significant role in the proof
of Theorem \ref{signless}.
In the proof of Theorem \ref{L2}, we will adopt some
analytic techniques from the proof of \cite[Theorem $12$]{N2014A}.

\begin{theorem} \label{L2}
If $F$ is a graph with chromatic number $\chi(F)\geq3$ and $\pi_{q}(F)>1$, then
\[  \pi_{q}(F)=2\pi(F). \]
\end{theorem}

\begin{proof}
First of all, we show that $\pi_{q}(F)\geq 2\pi(F)$.
Let $H$ be an $F$-free graph on $n$ vertices with maximum number of edges, that is,
$e(H)=\mathrm{ex}(n,F)$. Taking the vector $\mathbf{y}=(n^{-1/2},\ldots,n^{-1/2}) \in \mathbb{R}^n$, we have
\begin{equation} \label{eq-geq}
\frac{\mathrm{ex}_q (n,F)}{n}\geq \frac{q(H)}{n}  \geq \frac{\mathbf{y}^\mathrm{T}Q(H)\mathbf{y}}{n}=\frac{4\, \mathrm{ex}(n,F)}{n^{2}}.
\end{equation}
Consequently, we get
$\pi_{q}(F)\geq 2\pi(F)$.

In what follows, we show that $\pi_{q}(F)\leq 2\pi(F)$.
Since $\pi_{q}(F)>1$ and obviously $\pi_{q}(F)\leq2$, we assume that $\pi_{q}(F)=1+\varepsilon$, where $0<\varepsilon\leq1$.
Then we have the following two claims.

\begin{myCli}\label{claim-1}
There exist infinitely many $n\in \mathbb{N}$ such that
$$\frac{\mathrm{ex}_q (n-1,F)}{n-2}-\frac{\mathrm{ex}_q (n,F)}{n-1}<\frac{1}{n\log n}\cdot\frac{\mathrm{ex}_q (n,F)}{n-1}.$$
\end{myCli}

\begin{proof}[Proof of Claim \ref{claim-1}]
Assume for a contradiction that there exists $n_{0}$ such that for all $n\geq n_{0}$,
$$\frac{\mathrm{ex}_q (n-1,F)}{n-2}-\frac{\mathrm{ex}_q (n,F)}{n-1}\geq\frac{1}{n\log n}\cdot\frac{\mathrm{ex}_q (n,F)}{n-1}.$$
 Summing the inequalities for all $n_0,n_0+1,\ldots, k$, we get
\begin{eqnarray*}
\frac{\mathrm{ex}_q (n_{0}-1,F)}{n_{0}-2}-\frac{\mathrm{ex}_q (k,F)}{k-1}
&=& \sum_{n=n_{0}}^{k}
\left( \frac{\mathrm{ex}_q (n-1,F)}{n-2}-\frac{\mathrm{ex}_q (n,F)}{n-1} \right) \\
&\geq& \sum_{n=n_{0}}^{k} \frac{1}{n\log n}\cdot\frac{\mathrm{ex}_q (n,F)}{n-1}\\
&\geq& \pi_{q}(F)\sum_{n=n_{0}}^{k} \frac{1}{n\log n},
\end{eqnarray*}
where the last inequality follows from Lemma \ref{L1}. Note that the left-hand side is bounded and the right-hand side
diverges. Taking $k$ sufficiently large, we obtain a contradiction.
\end{proof}

\begin{myCli}\label{claim-2}
For  sufficiently large $n$, if
\begin{equation}\label{zy2}
\frac{\mathrm{ex}_q (n-1,F)}{n-2}<\frac{\mathrm{ex}_q (n,F)}{n-1}\bigg(1+\frac{1}{n\log n}\bigg),
\end{equation}
then
$$\mu_{n}^{2}>\frac{1}{n}\bigg(1-\frac{1}{\log \log n}\bigg).$$
\end{myCli}

\begin{proof}[Proof of Claim \ref{claim-2}]
Assume for a contradiction that $$\mu_{n}^{2}\leq\frac{1}{n}\bigg(1-\frac{1}{\log \log n}\bigg).$$
Similar to (\ref{zy1}), we deduce that
\begin{eqnarray} \notag
\frac{\mathrm{ex}_q (n-1,F)}{n-2}
&\geq & \frac{\mathrm{ex}_q (n,F)}{n-1}\bigg(1+\frac{1-n\mu_{n}^{2}}{(n-2)(1-\mu_{n}^{2})}\bigg)-\frac{1-n\mu_{n}^{2}}{(n-2)(1-\mu_{n}^{2})}\\ \notag
&\geq & \frac{\mathrm{ex}_q (n,F)}{n-1}\bigg(1+\frac{\varepsilon}{1+\varepsilon}\cdot\frac{1-n\mu_{n}^{2}}{(n-2)(1-\mu_{n}^{2})}\bigg)\\ \notag
&\geq&  \frac{\mathrm{ex}_q (n,F)}{n-1}\bigg(1+\frac{\varepsilon(1-n\mu_{n}^{2})}{2n}\bigg)\\
&\geq&  \frac{\mathrm{ex}_q (n,F)}{n-1}\bigg(1+\frac{\varepsilon}{2n\log\log n}\bigg), \label{zy3}
\end{eqnarray}
where the second inequality follows from $\mathrm{ex}_q (n,F)/(n-1)\geq \pi_{q}(F)= 1+\varepsilon$,
and the third inequality follows from $0<\varepsilon\leq1$.
Combining  (\ref{zy2}) with (\ref{zy3}), for sufficiently large $n$, we have
$$\frac{\varepsilon}{2n\log\log n}<\frac{1}{n\log n},$$
which leads to a contradiction, since $\varepsilon >0$ is fixed and $n$ is sufficiently large.
\end{proof}

Combining Claim \ref{claim-1} with Claim \ref{claim-2}, there exists an increasing infinite  sequence   $\{n_{i}\}_{i=1}^{\infty}$ of positive integers such that
for each $n\in \{n_1,n_2,\ldots \}$, we have
\[  \mu_{n}^{2}>\frac{1}{n} \left(1-\frac{1}{\log \log n} \right) >0.\]
Lemma \ref{lem-2-2} implies that
\begin{eqnarray*}
\frac{\mathrm{ex}_q (n,F)}{n} &\leq&  \frac{\delta_{n}}{n}+\sqrt{ \left(\frac{\delta_{n}}{n}\right)^2+\bigg(\frac{1}{n\mu_{n}^{2}}-1\bigg)\frac{\delta_{n}}{n}}\\
&\leq& \frac{\delta_n}{n}+
\sqrt{\left(\frac{\delta_n}{n}\right)^{2} +\bigg(\frac{1}{1-1/ \log \log n}-1\bigg)\frac{\delta_{n}}{n}}\\
&\leq& \frac{2\delta_n}{n}+ \sqrt{\frac{1}{1-1/\log \log n}-1}.
\end{eqnarray*}
Since $\delta_n \le \frac{2e(G_n)}{n} \le \frac{2\, \mathrm{ex}(n,F)}{n}$, it follows that
\[
\pi_{q}(F)=\lim_{i \to \infty}\frac{\mathrm{ex}_q (n_i,F)}{n_i}\leq
\lim_{i \to \infty}\left( \frac{4\, \mathrm{ex}(n_i,F)}{n_i^{2}}
+\sqrt{\frac{1}{1-1/\log \log n_i}-1} \right)=2\pi(F).
\]
This completes the proof of Theorem \ref{L2}.
\end{proof}

\section{Proofs of Theorems \ref{signless} and \ref{thm-1-5}}

The aim of this section is to prove Theorems \ref{signless} and \ref{thm-1-5}.

\begin{proof}[{\bf Proof of Theorem \ref{signless}}]
Due to $\pi (F) = 1- \frac{1}{\chi (F)-1}$ by Theorem \ref{edg},
it is sufficient to show that $\pi_q(F) = 2\pi (F)$ for every $F$ with $\chi (F)\ge 3$.
If $\chi(F)\geq4$, then (\ref{eq-geq}) gives
$\pi_{q}(F)\geq2\pi(F)  \geq\frac{4}{3}$.
By Theorem \ref{L2}, we get $\pi_{q}(F)=2\pi(F)$.
Now, we consider the case $\chi(F)=3$.
Suppose on the contrary that $\pi_{q}(F)>2\pi(F)=1$, then
Theorem \ref{L2} gives $\pi_{q}(F)=2\pi(F)$, which leads to a contradiction.
Thus, we must have $\pi_{q}(F)=2\pi(F)$. This completes the proof of Theorem \ref{signless}.
\end{proof}

Next, we present the proof of Theorem \ref{thm-1-5}.

\begin{proof}[{\bf Proof of Theorem \ref{thm-1-5}}]
Firstly, suppose that $F$ is not a star.
Then the complete bipartite graph $K_{1,n}$ is $F$-free, and it follows that $\mathrm{ex}_q(n,F) \ge q(K_{1,n})=n$.
Since $F$ is not a star, there exists an integer $t\ge 2$ such that $F$ is a subgraph of $K_{t,t}$.
Assume that $G$ is an $n$-vertex $F$-free graph, then $G$ is $K_{t,t}$-free. The K\H{o}vari--S\'{o}s--Tur\'{a}n theorem (see \cite{Bol78})
implies that $e(G)\le \mathrm{ex}(n,K_{t,t})= O(n^{2-1/t})$.
It was proved by Feng and Yu \cite{FY2009} that
if $G$ is a graph on $n$ vertices, then
$ q(G) \le \frac{2e(G)}{n-1} +n-2 \le O(n^{1-1/t}) + n$.
Consequently, we obtain $\mathrm{ex}_q(n,F)\le
(1+o(1))n$. Alternatively, we can derive this bound in a different way. Since $F$
is a bipartite graph, there exists a graph $F'$ such that $\chi(F')=3$ and $F\subseteq F'$.
By Theorem \ref{signless}, we have $\mathrm{ex}_q(n,F)\le \mathrm{ex}_q(n,F')\le
(1+o(1))n$, as desired. 

Suppose that $F$ is a star, say $F=K_{1,t}$.
Since $G$ is $K_{1,t}$-free, we know that the maximum degree $\Delta (G)\le t-1$.
It is well-known \cite[p. 50]{BH2012} that $q(G)\le
\max_{xy\in E(G)} (d(x)+d(y)) \le 2(t-1)$.
Therefore, we get $\mathrm{ex}_q(n,F) \le 2(t-1)$.
Taking $G$ as the disjoint union of  copies of $K_t$ (with some isolated vertices if possible),  we can see that $q(G)=q(K_t)= 2(t-1)$, which yields
$\mathrm{ex}_q(n,F) = 2(t-1)$.
\end{proof}

\noindent \textbf{Availability of data and materials.} No data was used for the research described in the article. The authors have no relevant financial or non-financial interests to
 disclose.

\subsection*{Acknowledgement}  
H. Li and L. Su are supported by National Natural Science Foundation of China (Nos.  12161047,  12061038) and  Jiangxi Provincial Natural Science foundation (No. 20224BCD41001).

\end{document}